\newtheorem{theorem}{\sc Theorem}[section]
\newtheorem{lemma}[theorem]{\sc Lemma}
\newtheorem{conjecture}[theorem]{\sc Conjecture}
\begin{document}

\title[Centralizers of bounded rank]
{On profinite groups in which  centralizers have bounded rank}

\author{Pavel Shumyatsky}
\address{Department of Mathematics, University of Brasilia, Brazil}
\email{pavel@unb.br}
\keywords{Profinite groups, groups of finite rank, centralizers}
\subjclass[2010]{20E18}
\thanks{This research was supported by CNPq and FAPDF. The author thanks the referee for valuable comments leading to significant improvements of an earlier version of the paper.}

\maketitle

\begin{abstract}
The article deals with profinite groups in which centralizers are of finite rank. For a positive integer $r$ we prove that if $G$ is a profinite group in which the centralizer of every nontrivial element has rank at most $r$, then  $G$ is either a pro-$p$ group or a group of finite rank. Further, if $G$ is not virtually a pro-$p$ group, then $G$ is virtually of rank at most $r+1$.
\end{abstract}

\section{Introduction}

There are several recent publications dealing with profinite groups in which centralizers have certain prescribed properties (cf. \cite{shuzaza,acn,shuza,prop}). In the present paper we are concerned with the following conjecture made in \cite{shuza}.

\begin{conjecture} Let $G$ be a profinite group in which the centralizer of every nontrivial element has finite rank. Suppose that $G$ is not a pro-$p$ group. Then  $G$ has finite rank.
\end{conjecture}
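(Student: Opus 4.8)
The plan is to prove the structural statement announced in the abstract, from which the conjecture follows; throughout, the decisive features are that the hypothesis is inherited by every closed subgroup, and that it forces every abelian closed subgroup to have small rank. Indeed, if $A$ is abelian and $1\neq a\in A$, then $A\le C_G(a)$, so $A$ has rank $\le r$ (in the conjecture's non-uniform setting each such $A$ merely has finite rank, but I will show that a single well-chosen element restores uniformity exactly where it is needed). First I would record that finite rank is detected on open subgroups and is governed by Sylow ranks: by Lucchini's theorem $\mathrm{rank}(G)\le 1+\sup_p\mathrm{rank}(P_p)$, where $P_p$ ranges over the Sylow pro-$p$ subgroups. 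Thus the whole problem reduces to bounding the ranks of the Sylow subgroups, and the additive $1$ in the bound ``$r+1$'' is precisely Lucchini's.

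Next I would cut the group down to the prosoluble case. Using the classification of finite simple groups one checks that a finite nonabelian simple group all of whose element centralizers have rank $\le r$ has order bounded by a function of $r$ (large alternating, classical and exceptional groups carry elements whose centralizers have arbitrarily large rank); moreover a section isomorphic to $S\times S$ would produce, via $x=(s,1)$, an element whose centralizer contains a copy of $S$, while an infinite Cartesian power of a nontrivial finite group would produce an element of infinite-rank centralizer. These observations bound the nonabelian composition factors of $G$ both in order and in multiplicity, so $G$ has an open normal prosoluble subgroup; replacing $G$ by it — which costs only the word \emph{virtually} — I may assume $G$ prosoluble.

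Now I would bring in the Fitting subgroup $F=F(G)$, which in the prosoluble setting satisfies $C_G(F)\le F$, and write $F=\prod_s F_s$ as the Cartesian product of its Sylow subgroups. If $F$ is \emph{not} a pro-$p$ group, fix a prime $t$ with $F_t\neq 1$ and a single element $1\neq x_t\in F_t$; since distinct Sylow subgroups of the pronilpotent group $F$ commute, $F_s\le C_G(x_t)$ for every $s\neq t$, and symmetrically $F_t\le C_G(x_s)$. Hence all Sylow subgroups of $F$ have rank bounded by the \emph{single} number $\mathrm{rank}(C_G(x_t))$ — this is where uniformity is recovered for free, even under the conjecture's non-uniform hypothesis — so $F$ has finite rank. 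Since $G/C_G(F)$ embeds in $\mathrm{Aut}(F)$, since $C_G(F)\le F$, and since automorphism groups of profinite groups of finite rank again have finite rank (a standard fact), $G$ has finite rank, as required.

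There remains the case that $F=F(G)$ is a pro-$p$ group, and this is the main obstacle. If $F$ has finite rank the automorphism argument again yields that $G$ has finite rank, so the crux is to show that an infinite-rank pro-$p$ Fitting subgroup forces $G$ to be pro-$p$ (for the conjecture), or more precisely virtually pro-$p$. The point is that each $p'$-element $y$ of $G$ acts coprimely on $F$ with $C_F(y)=C_G(y)\cap F$ of rank $\le r$; I would invoke the theory of coprime automorphisms with small fixed-point subgroups — Thompson's theorem that a fixed-point-free automorphism of prime order forces nilpotency, together with its bounded-rank refinements — to argue that a nontrivial $p'$-action on $F$ with bounded-rank fixed points cannot coexist with $\mathrm{rank}(F)=\infty$ unless the $p'$-content of $G$ collapses, i.e. $G$ becomes virtually pro-$p$. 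The free pro-$p$ group, in which all centralizers are procyclic yet the rank is infinite, shows that the pro-$p$ alternative is genuinely unavoidable and that it is this last step, rather than any of the earlier reductions, that carries the real weight; making the fixed-point-rank estimates uniform enough to separate ``virtually pro-$p$'' from ``finite rank'' is the delicate heart of the argument.
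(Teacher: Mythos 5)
First, a point of logic: the statement you are proving is the paper's \emph{Conjecture}, which the paper explicitly leaves open; what is actually proved there is only the uniform version (Theorem~\ref{main}, where all centralizers have rank at most a single $r$). The non-uniformity is not a cosmetic issue and your ``restore uniformity with one well-chosen element'' device does not cover the places where it bites: the Guralnick--Lucchini reduction needs one bound valid for \emph{all} Sylow subgroups simultaneously, and your argument for killing Cartesian powers of simple groups collapses in the non-uniform setting, since a centralizer containing a copy of a finite simple group $S$ has finite rank and so contradicts nothing. Worse, two of your auxiliary claims are false as stated: (i) a finite nonabelian simple group all of whose element centralizers have rank at most $r$ need \emph{not} have $r$-bounded order --- in $\mathrm{PSL}(2,p)$, $p$ prime, every nontrivial element has cyclic or dihedral or order-$p$ centralizer, hence rank at most $2$; (ii) the automorphism group of a profinite group of finite rank need not have finite rank --- $\mathrm{Aut}(\widehat{\mathbb{Z}})\cong\prod_p\mathbb{Z}_p^{*}$ has a Sylow $2$-subgroup containing $\prod_{p>2}C_2$, of infinite rank. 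Claim (ii) breaks your passage from ``$F(G)$ has finite rank'' to ``$G$ has finite rank''; the paper never goes through $F(G)$ (whose very existence as a self-centralizing pronilpotent normal subgroup is delicate for prosoluble groups of infinite Fitting height) but instead decomposes $G$ via basis normalizers and $\gamma_\infty$, inducting on Fitting height and treating infinite Fitting height separately (Lemmas~\ref{finiteh} and~\ref{prosolu}). The reduction to virtually prosoluble is likewise done without CFSG, via Feit--Thompson, a rank bound for the Sylow $2$-subgroup, the nonsoluble-length bound of Theorem~\ref{leinen}, and Wilson's structure theory.

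The more serious problem is that your proposal stops exactly at the step that carries all the weight. You correctly identify the crux --- a coprime $p'$-action on a pro-$p$ (or pronilpotent) group $M$ with small fixed-point subgroup should force $M$ to have finite rank --- but you only say you ``would invoke'' fixed-point theorems of Thompson type and then concede that making this work is ``the delicate heart of the argument.'' That is not a proof. The paper resolves this point in Lemma~\ref{nachalo} by invoking Khukhro's Theorem~\ref{khu}: if a nilpotent group $P$ admits an automorphism $a$ of prime order $q$ with $C_P(a)$ of rank $r$, then $P$ has a characteristic nilpotent subgroup $L$ of $q$-bounded class with $P/L$ of $(q,r)$-bounded rank; one then observes that $L$ lies inside the centralizer of any nontrivial element of $Z(L)$, so $L$ itself has rank at most $r$, whence $P$ has bounded rank. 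Note also that this lemma is needed already in your ``earlier reductions'' (the paper uses it to bound the rank of the Sylow $2$-subgroup before any prosolubility is available), so the gap is not confined to the final step. Without a concrete substitute for Khukhro's theorem and for the two false claims above, the proposal does not constitute a proof of Theorem~\ref{main}, let alone of the Conjecture.
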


Recall that a profinite group $G$ is said to have finite rank $r$ if every subgroup of $G$ can be generated by $r$ elements. Throughout the paper by a subgroup of a profinite group we mean a closed subgroup and we say that a subgroup is generated by some subset if it is topologically generated by that subset. 

Our purpose is to establish the following theorem which provides a substantial evidence in favor of the above conjecture.
\begin{theorem}\label{main} Let $r$ be a positive integer and $G$ a profinite group in which the centralizer of every nontrivial element has rank at most $r$. Then  $G$ is either a pro-$p$ group or a group of finite rank.
\end{theorem}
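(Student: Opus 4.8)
The plan is to reduce the problem to bounding the rank of the Sylow subgroups of $G$ and then to exploit the presence of two distinct primes to control those ranks. I would begin with two elementary but decisive observations. First, if $Z(G)\neq 1$, pick a nontrivial $z\in Z(G)$; then $C_G(z)=G$, so by hypothesis $G$ itself has rank at most $r$ and we land in the second alternative. Hence I may assume $Z(G)=1$. Second, the same device works one Sylow subgroup at a time: if $P$ is a Sylow $p$-subgroup of $G$ with $Z(P)\neq 1$ and $1\neq z\in Z(P)$, then $P\leq C_G(z)$, whence $\mathrm{rank}(P)\leq r$. Since a profinite group has finite rank precisely when the ranks of its Sylow subgroups are bounded (and the rank of $G$ is then bounded in terms of that bound), the whole theorem would follow at once if every Sylow subgroup had nontrivial centre. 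More generally, every abelian subgroup $A$ of any Sylow subgroup satisfies $\mathrm{rank}(A)\leq r$, since $A\leq C_G(a)$ for any $1\neq a\in A$.

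The remaining, and genuinely hard, case is that of a Sylow $p$-subgroup $P$ with $Z(P)=1$. This is exactly the configuration realised by a free pro-$p$ group, whose nontrivial centralizers are procyclic and which has infinite rank; this is why pro-$p$ groups must be admitted as an exception, and it shows that a bound on the ranks of abelian subgroups, or of centralizers, cannot by itself force finite rank. To break the symmetry I would use the hypothesis that $G$ is not a pro-$p$ group: then a prime $q\neq p$ divides the order of $G$, and I would look for a nontrivial $q$-subgroup acting coprimely on a suitable conjugate of $P$. After first reducing to the prosoluble situation, where Hall subgroups and Sylow bases are available so that a Sylow $q$-subgroup $Q$ and $P$ may be taken to permute and generate a subgroup $PQ$, I would pass to an elementary abelian section $A$ of $Q$ of order $q^2$ and invoke coprime generation: $P$ is the product of the fixed-point subgroups of the $q+1$ subgroups of order $q$ in $A$. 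Each such fixed-point subgroup lies in the centralizer of a nontrivial $q$-element, hence has rank at most $r$, and the known bounds on the rank of a coprimely acted-upon group in terms of the ranks of these fixed-point subgroups would then bound $\mathrm{rank}(P)$.

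The main obstacle, and the step demanding the most care, is precisely making this last mechanism work and yield a bound independent of the prime $p$. A single cyclic $q$-element is not enough: a fixed-point-free automorphism of order $q$ of an elementary abelian $p$-group of rank $n$ has trivial centralizer while $n$ is arbitrary, so one really needs a noncyclic $q$-action, and one must therefore rule out the possibility that $G$ provides only cyclic coprime symmetries of a trivial-centre Sylow subgroup. Closely tied to this is the uniformity of the resulting bound across all primes, since the naive coprime estimate degrades with $q$, as well as the treatment of nonabelian composition factors before the prosoluble reduction, where the boundedness of centralizers must be used to show that such factors cannot intervene in an essential way. I expect the heart of the argument to be a structural dichotomy showing that, once $G$ is not pro-$p$, a Sylow subgroup with trivial centre cannot in fact have unbounded rank; equivalently, that the free-pro-$p$ behaviour is incompatible with the coprime symmetries forced by the presence of a second prime.
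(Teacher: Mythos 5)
Your opening reductions are sound and agree in spirit with the paper's strategy: bound the ranks of the Sylow subgroups and apply the Guralnick--Lucchini theorem, exploiting that abelian subgroups and subgroups with nontrivial centre lie inside centralizers of nontrivial elements. But the proposal stalls exactly where you say it does, and the configuration you hope to exclude --- a trivial-centre Sylow $p$-subgroup admitting only a \emph{cyclic} coprime symmetry of prime order $q$ --- cannot be excluded: $G$ may have procyclic Sylow $q$-subgroups for every $q\neq p$, so no elementary abelian group of order $q^2$ need act, and the coprime generation identity $P=\prod_i C_P(A_i)$ is simply unavailable. The missing ingredient is Khukhro's theorem on automorphisms that are almost regular in the sense of rank: if a pronilpotent group $P$ admits a coprime automorphism $a$ of prime order $q$ with $C_P(a)$ of rank $r$, then $P$ has a characteristic subgroup $L$ of $q$-bounded nilpotency class with $P/L$ of $(q,r)$-bounded rank. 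This is precisely what makes a single cyclic $q$-element suffice: $L$, being pronilpotent, has nontrivial centre, hence $L\leq C_G(z)$ for $1\neq z\in Z(L)$ and $L$ has rank at most $r$; together with the bound on $P/L$ this gives a $(q,r)$-bound on the rank of $P$ that is uniform over all primes $p$. Note also that your cautionary example (a fixed-point-free automorphism of order $q$ of an elementary abelian $p$-group of large rank) cannot occur inside $G$: by your own observation such an abelian subgroup would already have rank at most $r$.

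Two further steps are asserted but not established. First, the reduction to the (virtually) prosoluble case is nontrivial: the paper obtains it from the Feit--Thompson theorem (producing an infinite Sylow $2$-subgroup), a Frattini argument yielding an odd-order element normalizing a Sylow $2$-subgroup (so that the mechanism above bounds its rank), the bound on nonsoluble length in terms of the rank of a Sylow $2$-subgroup, and Wilson's structure theory for compact torsion groups to conclude that the nonprosoluble factors are finite. Second, even in the prosoluble case one must actually manufacture the coprime actions: in a product $PQ$ of permutable Sylow subgroups neither factor need be normal, so coprime action is not automatic; the paper arranges it by induction on the Fitting height using basis normalizers and Hall subgroups. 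As it stands, the proposal correctly isolates the central difficulty but does not contain the idea that resolves it.
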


We mention that in a free pro-$p$ group all centralizers are procyclic and therefore pro-$p$ groups satisfying the hypothesis of Theorem \ref{main} in general need not be of finite rank.
It is natural to suspect that in the latter case of the above theorem the rank of $G$ should be bounded in terms of $r$ only. The author is pretty much doubtful about this. On the other hand, we have the following result. Recall that a group is said to virtually have some property if it has a subgroup of finite index with that property.

\begin{theorem}\label{main2} Let $G$ be a profinite group in which the centralizer of every nontrivial element has rank at most $r$. Then either $G$ is a virtually pro-$p$ group or $G$ is virtually of rank at most $r+1$.
\end{theorem}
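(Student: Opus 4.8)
The plan is to derive Theorem~\ref{main2} from Theorem~\ref{main}. If $G$ is virtually pro-$p$ we are in the first alternative, so assume from now on that $G$ is \emph{not} virtually pro-$p$; then $G$ is in particular not pro-$p$, and Theorem~\ref{main} shows that $G$ has finite rank. The hypothesis on centralizers, the property of not being virtually pro-$p$, and the finiteness of the rank all pass to open subgroups, and an open subgroup of an open subgroup of $G$ is open in $G$; so throughout I am free to replace $G$ by any open normal subgroup. Two elementary remarks will drive the argument. First, any nontrivial abelian subgroup $A$ is contained in $C_G(a)$ for each $1\neq a\in A$, so every abelian subgroup of $G$ has rank at most $r$. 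Second, if $N=\prod_q N_q$ is a pronilpotent group satisfying our hypothesis and $N$ is not pro-$p$, then an element lying in a single Sylow factor $N_p$ is centralized by $\prod_{q\neq p}N_q$; choosing two distinct primes with nontrivial Sylow factors we deduce that each $N_p$ has rank at most $r$, and hence $\mathrm{rank}(N)=\sup_q \mathrm{rank}(N_q)\le r$.

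Now I reduce to the prosoluble case. A profinite group with infinitely many nonabelian composition factors has infinite rank, since every nonabelian finite simple group has even order; hence our $G$ has only finitely many such factors and therefore an open normal prosoluble subgroup, by which I replace $G$. Let $F=F(G)$ be the Fitting subgroup, so that $C_G(F)\le F$. I claim $F$ is not pro-$p$: otherwise, as $F$ has finite rank its automorphism group is virtually pro-$p$, and since $G/Z(F)$ embeds in $\mathrm{Aut}(F)$ with $Z(F)\le F$ pro-$p$, the group $G$ would be virtually pro-$p$, a contradiction. Thus $F=\prod_q F_q$ is not pro-$p$, and the second remark gives $\mathrm{rank}(F)\le r$. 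The crux of the proof is then to show that $G/F$ is \emph{virtually procyclic}, i.e.\ virtually of rank at most $1$. Granting this, choose an open $U$ with $F\le U$ and $U/F$ procyclic; for every closed $K\le U$ we have $\mathrm{rank}(K\cap F)\le \mathrm{rank}(F)\le r$ while $K/(K\cap F)$ embeds in the procyclic group $U/F$, so $K$ is generated by at most $r+1$ elements. Hence $U$ is open of rank at most $r+1$, as required.

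To prove the claim I would study the action of $G$ on the layers $V_q=F_q/\Phi(F_q)$, each a vector space of dimension $d(F_q)\le r$ over the field of $q$ elements, through which $G/F$ maps into $\prod_q \mathrm{GL}(V_q)$; by coprimality the prime-to-$q$ part of the action on the pro-$q$ group $F_q$ is faithfully recorded on $V_q$. The centralizer hypothesis is what constrains these actions: an element fixing a large subspace in many of the $V_q$ simultaneously would have a centralizer of rank larger than $r$. The goal is to use this to force the prime-to-$q$ part of the image in each $\mathrm{GL}(V_q)$ to act essentially fixed-point-freely, hence to be cyclic, and then to splice these cyclic actions over all primes into a single virtually procyclic quotient. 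I expect this last splicing to be the main obstacle: a priori the image of $G/F$ in $\prod_q \mathrm{GL}(V_q)$ could be abelian of unbounded rank (the product $\prod_q \mathrm{GL}_1$ of multiplicative groups of the residue fields already is), and excluding this is precisely where one must exploit, simultaneously, the finiteness of $\mathrm{rank}(G)$ and the uniform bound $r$ on all centralizers at once.
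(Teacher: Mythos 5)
Your opening reductions are sound and broadly parallel to the paper's: passing to the non\nobreakdash-virtually-pro-$p$ case, invoking Theorem \ref{main} to get finite rank, reducing to an open prosoluble normal subgroup (Lemma \ref{hofg} does exactly this), noting that abelian subgroups have rank at most $r$, and using Theorem \ref{dlms} to show that $F=F(G)$ cannot be pro-$p$ and hence has rank at most $r$. The problem is the step you yourself flag as the crux: the claim that $G/F$ is virtually procyclic is not only unproved in your sketch, it is false. Let $F=\prod_q (C_q\times C_q)$ over all primes $q$, let $A=\widehat{\mathbb Z}\times\widehat{\mathbb Z}$ act on $F$ with the $i$th factor of $A$ acting on the $i$th coordinate of each $C_q\times C_q$ through the natural surjection $\widehat{\mathbb Z}\to \mathrm{Aut}(C_q)$, and let $G=FA$ be the corresponding semidirect product. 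Since $F$ is abelian, for $1\neq f\in F$ one computes $C_G(f)=FC_A(f)$; for $1\neq a\in A$ one gets $C_G(a)=C_F(a)A$; and for a mixed element $fa$ the centralizer meets $F$ in $C_F(a)$ and maps injectively modulo that into $G/F$. So every centralizer has rank at most $4$. Moreover $C_A(F)=\bigcap_q\left((q-1)\widehat{\mathbb Z}\times(q-1)\widehat{\mathbb Z}\right)=1$ by Dirichlet, one checks $F(G)=F$, and $G/F\cong\widehat{\mathbb Z}^2$ is not virtually procyclic. So no analysis of the modules $V_q=F_q/\Phi(F_q)$ can deliver your claim; the obstruction you anticipated (an abelian image of rank $2$ in $\prod_q\mathrm{GL}(V_q)$) genuinely occurs and is compatible with the centralizer hypothesis.

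The paper sidesteps this by never attempting to bound $\mathrm{rank}(G)$ as $\mathrm{rank}(F)+\mathrm{rank}(G/F)$. Instead it shows that, after passing to a suitable open subgroup, every Sylow subgroup of $G$ has rank at most $r$, and then applies the Guralnick--Lucchini theorem (Theorem \ref{gulu}) to conclude that the rank is at most $r+1$; note that in the example above the Sylow subgroups have rank $4\leq r$, consistent with this. The source of the Sylow bound is Remark \ref{zamech}: in the coprime configuration $G=MA$ of Lemma \ref{nachalo} with $A$ infinite, the normal factor $M$ has rank at most $r$ \emph{exactly} (not merely finite), because each quotient $M/N$ by an open $G$-invariant subgroup is covered by the centralizer of an open subgroup of $A$. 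The entire apparatus of iterated basis normalizers $T_1,T_2,\dots$ and the case analysis on $\pi(T_1)$ and on the $p'$-length of $G$ exists to manufacture, for each prime $q$, an infinite coprime group of automorphisms normalizing a Sylow $q$-subgroup so that Remark \ref{zamech} applies. That is the ingredient your proposal is missing.
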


In the next section we collect some useful results needed for the proofs of the above theorems. In Section 3 we prove Theorem \ref{main}. Section 4 contains a proof of Theorem \ref{main2}. Our notation throughout the article follows \cite{rz}.

\section{Preliminaries}

Throughout the article automorphisms of a profinite group are assumed to be continuous. If $A$ is a group of automorphisms of a group $G$, the subgroup generated by elements of the form $g^{-1}g^\alpha$ with $g\in G$ and $\alpha\in A$ is denoted by $[G,A]$. It is well-known that the subgroup $[G,A]$ is an $A$-invariant normal subgroup in $G$. We also write $C_G(A)$ for the centralizer  of $A$ in $G$.

The next lemma is a list of useful facts on coprime actions. Here $|K|$ means the order of a profinite group $K$ (see for example \cite{rz} for the definition of the order of a profinite group). For finite groups the lemma is well known (see for example \cite[Ch.~5 and 6]{go}). For infinite profinite groups the lemma follows from the finite case and the inverse limit argument (see \cite[Proposition 2.3.16]{rz} for a detailed proof of Part (iii)). As usual, $\pi(G)$ denotes the set of prime divisors of the order of $G$.

\begin{lemma}\label{cc}
Let a profinite group $A$ act by automorphisms on a profinite group $G$ such that $(|G|,|A|)=1$. Then
\begin{enumerate}
\item[(i)] $G=[G,A]C_{G}(A)$.
\item[(ii)] $[G,A,A]=[G,A]$. 
\item[(iii)] $C_{G/N}(A)=NC_G(A)/N$ for any $A$-invariant normal subgroup $N$ of $G$.
\item[(iv)] For each prime $q\in\pi(G)$ there is an $A$-invariant Sylow $q$-subgroup in $G$.
\end{enumerate}
\end{lemma}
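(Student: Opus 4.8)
The plan is to prove the four assertions for finite groups first and then lift them to an arbitrary profinite $G$ by an inverse limit argument, since both the hypothesis and each of the four conclusions behave well under passage to $A$-invariant finite quotients. Concretely, I would fix a cofinal system of $A$-invariant open normal subgroups $U$ of $G$ (these exist because $A$ acts continuously, so every open normal subgroup contains an $A$-invariant open normal subgroup) and write $G=\varprojlim_U G/U$. On each finite quotient $G/U$ the induced action of $A$ is again coprime, so the finite versions of (i)--(iv) apply, and the profinite conclusions follow by passing to the limit, using compactness to guarantee that the relevant inverse system of nonempty finite solution sets has nonempty inverse limit.

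For the finite case I would settle (iv) first. Form the semidirect product $\Gamma=GA$ with $G$ normal. Since $q\nmid|A|$, a Sylow $q$-subgroup $Q$ of $G$ is also a Sylow $q$-subgroup of $\Gamma$, and the Frattini argument gives $\Gamma=G\,N_\Gamma(Q)$. Inside $N_\Gamma(Q)$ the normal subgroup $N_G(Q)$ has index $|A|$, which is coprime to $|N_G(Q)|$, so by Schur--Zassenhaus it admits a complement $B$; this $B$ is then a complement to $G$ in $\Gamma$ of order $|A|$. All such complements are conjugate in $\Gamma$ by an element of $G$ (Schur--Zassenhaus conjugacy, legitimate because one of $G$, $A$ has odd order by the Feit--Thompson theorem and is therefore solvable), so $B=A^{g}$ for some $g\in G$; then $A$ normalizes $Q^{g^{-1}}$, the desired $A$-invariant Sylow $q$-subgroup.

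The heart of the finite case is the covering property (iii). Replacing $G$ by $N\langle x\rangle$ for a single coset $xN\in C_{G/N}(A)$, I would reduce it to the statement that $G=N\,C_G(A)$ whenever $A$ acts trivially on $G/N$, and then induct on $|N|$. When a minimal $A$-invariant normal subgroup $M\le N$ is abelian, the obstruction to replacing a fixed coset by a fixed element is a class in $H^{1}(A,M)$, which vanishes because $(|A|,|M|)=1$; the inductive step passes to $G/M$ and pulls the covering back through $M$, while the nonabelian minimal case is handled by the same Schur--Zassenhaus/Frattini mechanism used for (iv). Granting (iii), assertion (i) is immediate: taking $N=[G,A]$, the group $A$ acts trivially on $G/[G,A]$, so $C_{G/[G,A]}(A)=G/[G,A]=[G,A]C_G(A)/[G,A]$, whence $G=[G,A]C_G(A)$. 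Finally (ii) follows from (i): set $H=[G,A]$ and apply (i) to the action of $A$ on $H$ to get $H=[H,A]\,C_H(A)$ with $[H,A]=[G,A,A]$, so modulo $[G,A,A]$ the image of $H$ coincides with the centralized image of $C_H(A)$; since $G=[G,A]C_G(A)=H\,C_G(A)$, both factors become centralized by $A$ modulo $[G,A,A]$, forcing $A$ to act trivially on $G/[G,A,A]$, that is $[G,A]\subseteq[G,A,A]$, and the reverse inclusion is trivial.

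The main obstacle is not any single computation but marshalling the correct general-position tools: the covering lemma (iii) for possibly nonabelian $N$ and the conjugacy of $A$-invariant Sylow subgroups both rest on Schur--Zassenhaus conjugacy and Glauberman's lemma, which require that one of $G$, $A$ be solvable --- a point one must explicitly secure via Feit--Thompson in the coprime setting. The second delicate point is the passage to the profinite limit: the conclusions of (i)--(iv) are equalities or existence statements about closed subgroups, so I would need to verify that each finite-level solution set is nonempty and that these sets form a surjective inverse system, after which compactness yields a compatible solution in $G=\varprojlim_U G/U$. This is precisely the argument carried out in detail for (iii) in the reference already cited.
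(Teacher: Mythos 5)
Your proposal is correct and matches the paper's approach: the paper itself gives no argument beyond citing the well-known finite case (Gorenstein, Ch.~5 and 6) and the inverse limit argument (with Ribes--Zalesskii, Proposition 2.3.16, for part (iii)), and your sketch is precisely a reconstruction of those standard finite-group proofs (Frattini plus Schur--Zassenhaus for (iv), coprime covering via $H^1$-vanishing and Glauberman's lemma for (iii), with (i) and (ii) as formal consequences) followed by the same passage to the limit. The only point worth tightening is the nonabelian minimal case of (iii), which is really Glauberman's lemma applied to the regular action of $M$ on the $A$-invariant coset rather than a direct Schur--Zassenhaus/Frattini step, but you cite the right tool at the end.
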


The following theorem is immediate from the corresponding results on finite groups obtained independently by Guralnick \cite{gu} and Lucchini \cite{lu}. For finite soluble groups the corresponding result was established by Kovacs \cite{kovacs}.
\begin{theorem}\label{gulu} Let $r$ be a positive integer and $G$ a profinite group in which every Sylow subgroup has rank at most $r$. Then the rank of $G$ is at most $r+1$.
\end{theorem}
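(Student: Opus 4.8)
The plan is to reduce the statement to the corresponding assertion for finite groups, where it is precisely the theorem of Guralnick \cite{gu} and Lucchini \cite{lu} (or, in the soluble case, of Kovacs \cite{kovacs}), and then to transfer the bound back to $G$ by means of its finite continuous quotients. Two ingredients are required: that the rank of a profinite group is determined by its finite quotients, and that the hypothesis on Sylow subgroups is inherited by those quotients.

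For the first ingredient I would use the standard fact that
\[
\mathrm{rank}(G)=\sup\{\mathrm{rank}(G/N) : N \text{ open normal in } G\}.
\]
The inequality $\geq$ is immediate, since each $G/N$ is a quotient of $G$. For the reverse inequality, let $H$ be a closed subgroup of $G$ and $N$ an open normal subgroup. Then $HN/N\cong H/(H\cap N)$ is a subgroup of $G/N$ and hence can be generated by at most $\mathrm{rank}(G/N)$ elements. As $N$ ranges over a base of neighbourhoods of the identity, the subgroups $H\cap N$ form a base of neighbourhoods of the identity in $H$, so $d(H)=\sup_N d(H/(H\cap N))\leq \sup_N\mathrm{rank}(G/N)$; taking the supremum over all closed subgroups $H$ gives the claim.

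For the second ingredient, fix a prime $p$ and an open normal subgroup $N$. By Sylow theory for profinite groups, $G$ possesses a Sylow pro-$p$ subgroup $P$, and its image $PN/N$ is a Sylow $p$-subgroup of the finite group $G/N$. Since $PN/N\cong P/(P\cap N)$ is a continuous quotient of $P$ and rank does not increase under passage to quotients, $PN/N$ has rank at most $\mathrm{rank}(P)\leq d$. Because all Sylow $p$-subgroups of $G/N$ are conjugate, and conjugation preserves rank, every Sylow subgroup of $G/N$ has rank at most $d$.

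Combining the two ingredients, the finite theorem of Guralnick and Lucchini yields $\mathrm{rank}(G/N)\leq d+1$ for every open normal subgroup $N$, and the displayed equality then forces $\mathrm{rank}(G)\leq d+1$. I expect no serious obstacle in the profinite reduction itself: the entire depth of the statement resides in the finite case, which is being quoted rather than reproved, and the only points needing a little care are the description of $PN/N$ as a Sylow subgroup of $G/N$ and the inverse-limit characterisation of the rank, both of which follow from standard facts about profinite groups.
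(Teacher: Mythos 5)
Your proposal is correct and follows exactly the route the paper intends: the paper gives no explicit proof, asserting only that the statement is ``immediate from the corresponding results on finite groups'' of Guralnick and Lucchini via the standard inverse limit argument, and your two ingredients (rank of a profinite group as the supremum of the ranks of its finite continuous quotients, and inheritance of the Sylow rank bound by those quotients) are precisely that argument spelled out.
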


In the sequel we will need the following theorem, due to Khukhro \cite{khukhro}. We use the expression ``$(a,b,c\dots)$-bounded" to mean ``bounded from above by some function depending only on the parameters $a,b,c\dots$".

\begin{theorem}\label{khu}
Let $G$ be a finite nilpotent group with an automorphism $\alpha$ of prime order $q$ such that $C_G(\alpha)$ has rank $r$. Then $G$ contains a characteristic subgroup $N$ such that $N$ has $q$-bounded nilpotency class and $G/N$ has $(q,r)$-bounded rank.
\end{theorem}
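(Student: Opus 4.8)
The plan is to reduce to finite $p$-groups, pass to an associated Lie ring, and there exploit the Higman--Kreknin--Kostrikin theory of graded Lie rings in the presence of a small fixed-point subring.

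First I would reduce to the case where $G$ is a $p$-group. Since $G$ is finite nilpotent it is the direct product $\prod_p G_p$ of its Sylow subgroups, each $G_p$ is characteristic and hence $\alpha$-invariant, and $C_{G_p}(\alpha)$ has rank at most $r$. As the factors have pairwise coprime orders, the rank of any section of $G$ equals the largest rank among the corresponding sections of the $G_p$, and a direct product of groups of $q$-bounded class again has $q$-bounded class; hence it suffices to find, for each $p$, a characteristic subgroup $N_p\le G_p$ of $q$-bounded class with $G_p/N_p$ of $(q,r)$-bounded rank, and then take $N=\prod_p N_p$.

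For a $p$-group $G$ I would form the restricted Lie ring $L=L(G)$ associated with the Jennings--Zassenhaus dimension series, on which $\alpha$ induces an automorphism $\phi$ of order dividing $q$. Consider first the coprime case $q\ne p$. Here the bounded-rank quotient comes for free: $\alpha$ acts trivially on $G/[G,\alpha]$, so by Lemma~\ref{cc} the quotient $G/[G,\alpha]$ coincides with $C_{G/[G,\alpha]}(\alpha)=C_G(\alpha)[G,\alpha]/[G,\alpha]$, a quotient of $C_G(\alpha)$, and therefore has rank at most $r$. Thus the natural candidate is $N_0=[G,\alpha]$, and the whole content of the theorem is concentrated in showing that $[G,\alpha]$ has $q$-bounded nilpotency class. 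To attack this I would adjoin a primitive $q$-th root of unity and decompose $L$ into $\phi$-eigenspaces $L=\bigoplus_{i\in\mathbb{Z}/q}L_i$, so that $L_0=C_L(\phi)$, while $[G,\alpha]$ corresponds to the ideal generated by $\bigoplus_{i\ne 0}L_i$; the rank-$r$ hypothesis forces each homogeneous component of $L_0$ with respect to the dimension series to have $\mathbb{F}_p$-dimension at most $r$.

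The substance of the argument, and the step I expect to be the main obstacle, is the bound on the class. For this I would invoke the refined Higman--Kreknin--Kostrikin combinatorial identity for $\mathbb{Z}/q$-graded Lie rings, which rewrites any sufficiently long left-normed commutator of homogeneous elements as a combination of commutators that factor through the fixed-point component $L_0$; feeding in that the components of $L_0$ are boundedly thin then forces the long commutators governing $N_0$ to collapse, yielding $q$-bounded class. Two further points require care. The case $q=p$ is genuinely harder, since over $\mathbb{F}_p$ the operator $\phi-1$ is nilpotent and the eigenspace decomposition degenerates; there I would replace the grading by the filtration $\{\ker(\phi-1)^{j}\}_{j=1}^{q}$ and run the analogous combinatorics on the associated graded, and I would also have to recover the bounded-rank quotient without the coprime identity. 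Finally, having produced an $\alpha$-invariant subgroup of $q$-bounded class with $(q,r)$-bounded rank quotient, I would upgrade it to a \emph{characteristic} subgroup $N$ by the Khukhro--Makarenko method of enlarging to an intersection of images under automorphisms while preserving both the bounded class and the bounded rank of the quotient; this transfer back to $G$ is delicate precisely because rank passes through the Lazard--Jennings correspondence far less transparently than nilpotency class.
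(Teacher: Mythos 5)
First, a point of reference: the paper does not prove this statement at all --- Theorem~\ref{khu} is quoted from Khukhro \cite{khukhro} as a known result --- so your sketch can only be judged against the original source. Your toolkit (reduction to Sylow subgroups, the associated graded Lie ring, eigenspace decomposition for a primitive $q$-th root of unity, Higman--Kreknin--Kostrikin) is genuinely the right neighbourhood and is what Khukhro's proof lives in. But your central reduction is not a reduction: it is false. You claim that the natural candidate for $N$ is $[G,\alpha]$ and that ``the whole content of the theorem is concentrated in showing that $[G,\alpha]$ has $q$-bounded nilpotency class.'' Take $H$ to be the dihedral group of order $2^n$, let $q$ be an odd prime, let $G=H\times\dots\times H$ ($q$ factors) and let $\alpha$ permute the factors cyclically. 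Then $C_G(\alpha)$ is the diagonal copy of $H$ and has rank $2$, so the hypotheses hold with $r=2$ for every $n$. However, for $g=(h^{-1},1,\dots,1)$ one gets $g^{-1}g^{\alpha}=(h,h^{-1},1,\dots,1)$, so the subgroup of $[G,\alpha]$ generated by these elements surjects onto $H$ under the first-coordinate projection; hence $[G,\alpha]$ has nilpotency class at least $n-1$, which is unbounded while $q$ and $r$ stay fixed. (The theorem is of course true here: the characteristic subgroup $N$ realizing the conclusion is the product of the cyclic maximal subgroups, $C_{2^{n-1}}^{\,q}$, which is abelian with quotient of rank $2q$ --- note that it is not of the form $[G,\beta]$ for anything natural.) So the easy observation that $G/[G,\alpha]$ has rank at most $r$ in the coprime case, while correct, does not localize the difficulty where you put it; the subgroup $N$ of the theorem has to be constructed by an entirely different (and much longer) process.

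The same example shows why the Lie-ring step, as you describe it, cannot close. Higman--Kreknin--Kostrikin requires $L_0=C_L(\phi)=0$; when $L_0$ is merely of bounded dimension in each homogeneous layer, long commutators with entries outside $L_0$ do \emph{not} collapse --- in the example above they demonstrably do not, since the ideal generated by $\bigoplus_{i\neq 0}L_i$ has unbounded class. The ``almost fixed-point-free'' versions of HKK (due to Khukhro and Makarenko) do not rewrite long commutators through $L_0$ and then kill them; they produce, via a substantially more elaborate argument involving bounding derived length first and a careful analysis of the action on elementary abelian sections, a subalgebra of bounded corank that is nilpotent of bounded class. That is precisely the content of the theorem you are trying to prove, so it cannot be invoked as a combinatorial identity inside your proof. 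The final remarks about the Khukhro--Makarenko characteristic-subgroup machinery and the non-coprime case $q=p$ are reasonable things to worry about, but they are downstream of a main step that, as written, is based on a false claim.
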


The case $q=2$ of the above theorem was established in \cite{shu98} with a somewhat more precise statement, not even requiring the nilpotency of $G$. Using the routine inverse limit argument Theorem \ref{khu} can be extended to the case where $\alpha$ is a coprime automorphism of a profinite group $G$:
\medskip

{\it Let $G$ be a pronilpotent group admitting a coprime automorphism $\alpha$ of prime order $q$ such that $C_G(\alpha)$ has rank $r$. Then $G$ contains a characteristic subgroup $N$ such that $N$ has $q$-bounded nilpotency class and $G/N$ has $(q,r)$-bounded rank.}

\section{Proof of Theorem \ref{main}}

The next lemma deals with a crucial case of Theorem \ref{main}.
\begin{lemma}\label{nachalo} Let $r\geq1$ and $G$ a profinite group in which every nontrivial element has centralizer of rank at most $r$. Assume that $G$ has a nontrivial proper normal subgroup $M$ and a subgroup $A$ such that $(|M|,|A|)=1$ and $G=MA$. Then one of the following statements holds.
\begin{enumerate}
\item $A$ is infinite and virtually of rank at most $r$. In this case also $M$ has rank at most $r$.
\item $A$ is finite and the rank of $M$ is $(q,r)$-bounded, where $q$ is the smallest prime in $\pi(A)$.
\end{enumerate}
In either case $G$ has finite rank.
\end{lemma}
\begin{proof} If $M$ is finite the result is straightforward from Theorem \ref{gulu} so without loss of generality we assume that $M$ is infinite. Suppose first that $A$ is infinite. Let $N$ be an open subgroup of $M$ that is normal in $G$. Then $A$ induces a finite group of automorphisms of $M/N$ and therefore $A$ contains an open normal subgroup $B$ that acts on $M/N$ trivially. By Lemma \ref{cc} (iii) we have $M=NC_M(B)$. It follows that $M/N$ has rank at most $r$. Since this holds for each open subgroup of $M$ that is normal in $G$ we conclude that $M$ has rank at most $r$. Moreover, since the subgroup $B$ has nontrivial centralizer in $M$, it follows that $B$ has rank at most $r$, whence $A$ is virtually of rank at most $r$. 

Now consider the case where $A$ is finite. Choose $a\in A$ of order $q$. By Lemma \ref{cc} (iv) for each prime $p\in\pi(M)$ there is a Sylow $p$-subgroup $P$ of $M$ normalized by $a$. Observe that $a$ induces an automorphism of $P$ of order dividing $q$. Since $C_P(a)$ is of rank at most $r$, it follows from Khukhro's Theorem \ref {khu} that $P$ has a characteristic subgroup $L$ such that $L$ is nilpotent and $P/L$ has $(q,r)$-bounded rank. Since $L$ is nilpotent, it has nontrivial centre. Clearly, $L$ is contained in the centralizer of any nontrivial element of its centre. The centralizer is of rank at most $r$ and so both $L$ and $P/L$ have bounded rank. We conclude that $P$  has $(q,r)$-bounded rank. Since this holds for each prime $p\in\pi(M)$, we deduce from Theorem \ref{gulu} that $M$ has finite $(q,r)$-bounded rank, as required.
\end{proof}

\begin{proof}[Proof of Theorem \ref{main}.]
Recall that $G$ is a profinite group in which the centralizer of every nontrivial element has rank at most $r$. We wish to show that $G$ is either a pro-$p$ group or a group of finite rank. Assume that $G$ is not a pro-$p$ group. Then there exists an open normal subgroup $N$ of $G$ such that $\pi(G/N)$ contains two different primes say $q_1$ and $q_2$. Fix any prime $p\in\pi(N)$ and a Sylow $p$-subgroup $P$ of $N$. Without loss of generality we may assume $q_1\neq p$. By the Frattini argument, $G=N_G(P)N$. Therefore $N_G(P)$ contains a nontrivial $q_1$-element $a$. We make now use of Lemma 3.1 with $M=P$ and $A=\langle a\rangle$. Thus $r(P)\leq r$ or $r\leq f(r,q_1)$, where the function $f$ expresses the $(q_1,r)$-bound. Letting $d= max\{r, f(q_1,r), f(q_2,r)\}$ we conclude that every Sylow subgroup of N has rank at most $d$. Therefore, according to Theorem \ref{gulu}, $N$ has rank at most $d+1$ and since $N$ is open, $G$ has finite rank.
\end{proof}

\section{Proof of Theorem \ref{main2}}

We start this section with general facts on finite groups of given rank. Recall that the Fitting height of a finite soluble group $G$ is the length $h(G)$ of a shortest normal series of $G$ all of whose factors are nilpotent. 

\begin{lemma}\label{fitranl} Let $G$ be a finite soluble group of rank $r$. Then $h(G)$ is $r$-bounded. 
\end{lemma}
\begin{proof} Let $F=P_1\times\dots\times P_k$ be the Fitting subgroup of $G$, where $P_i$ are Sylow subgroups of $F$. Using \cite[Theorem 6.1.6]{go} we can pass to the quotient $G/\Phi(F)$ and without loss of generality assume that the subgroups $P_i$ are elementary abelian. For each $i$ the quotient $G/C_G(P_i)$ naturally embeds in the group of linear transformations of the $r$-dimensional linear space over the field with $p_i$ elements. By the well-known Zassenhaus theorem (see for example \cite[Theorem 3.23]{robinson}), the derived length of $G/C_G(P_i)$ is $r$-bounded. Let $j$ be the maximum of derived lengths of groups $G/C_G(P_i)$ for $i=1,\dots,k$. It follows that the $j$-th term of the derived series of $G$ is contained in $C_G(F)$. Taking into account that $C_G(F)\leq F$ (\cite[Theorem 6.1.3]{go}) we conclude that $G/F$ has derived length $j$. Since $j$ is $r$-bounded, the lemma is established.
\end{proof}

The nonsoluble length of a finite group $G$ is the least number of nonsoluble factors in a normal series all of whose factors are either soluble or direct products of nonabelian simple groups. We write $\lambda(G)$ for the nonsoluble length of $G$. It was shown in \cite{khu-shu131} that $\lambda(G)$ does not exceed the maximum Fitting height of soluble subgroups of $G$. Combining this with Lemma \ref{fitranl} we obtain

\begin{lemma}\label{lamranl} Let $G$ be a finite group of rank $r$. Then $\lambda(G)$ is $r$-bounded. 
\end{lemma}

Using the techniques developed by Wilson in \cite{wil83} (in particular, Lemma 2 in \cite{wil83}) the above results on finite groups of given rank can be extended to profinite groups. This enables us to deduce the following lemma.
\begin{lemma}\label{proflength} Let $G$ be a profinite group of finite rank. Then $G$ has a normal series of finite length 
$$
1=G_0\leq G_1\leq G_2\leq\dots\leq G_{s}=G \ \ \ \ \ \ \ \ \ \ \ \ \ \ \ \ \ \ \ \ (*)$$
all of whose factors are either pronilpotent or (topologically) isomorphic to Cartesian products of nonabelian finite simple groups.
\end{lemma}
We will now deduce that a profinite group of finite rank is virtually prosoluble.
\begin{lemma}\label{virtprosol} Let $G$ be a profinite group of finite rank. Then $G$ is virtually prosoluble. 
\end{lemma}
\begin{proof}
Let $$S=\prod_{i\in I}S_i$$ be a factor of the series $(*)$, where $S_i$ are nonabelian finite simple groups. Recall that the Sylow 2-subgroups of $G$ have finite rank. Since all subgroups $S_i$ have even order \cite{fetho}, it follows that $S$ is in fact a product of only finitely many finite simple groups and hence $S$ is finite. Thus, we conclude that the nonprosoluble factors of the series $(*)$ are finite. 

Let $G_{j_1+1}/G_{j_1}, G_{j_2+1}/G_{j_2},\dots, G_{j_k+1}/G_{j_k}$ be the nonprosoluble factors of the series $(*)$.
Let $$H=\{x\in G\ \vert\ \ [G_{j_i+1},x]\leq G_{j_i}\text{ for } i=1,\dots,k\}.$$ Thus $H$ is the ``centralizer" of the nonprosoluble factors of the series (*). It is straightforward that the subgroup $H$ is prosoluble. Since the factors $G_{j_i+1}/G_{j_i}$ are finite for all $i$ we deduce that $H$ is open in $G$. Hence, $G$ is virtually prosoluble, as required.
\end{proof}

We will also require the following theorem which is immediate from \cite[Theorem 5.7]{dlms}.

\begin{theorem}\label{dlms} Let $G$ be a pro-$p$ group of finite rank $r$. Then $\text{Aut}(G)$ is virtually a pro-$p$ group.
\end{theorem}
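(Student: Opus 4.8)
The plan is to exhibit an open pro-$p$ subgroup of $\mathrm{Aut}(G)$ directly, via reduction modulo the Frattini subgroup. Since $G$ has finite rank it is topologically finitely generated, so $\mathrm{Aut}(G)$, with its congruence topology, is a profinite group. I would work with the lower $p$-central series $P_1=G\supseteq P_2=\Phi(G)\supseteq P_3\supseteq\cdots$, whose terms are characteristic and whose intersection is trivial because $G$ is finitely generated pro-$p$. As each $P_i/P_j$ equals the corresponding lower $p$-central term of $G/P_j$, the quotient maps are coherent, and one has $\mathrm{Aut}(G)=\varprojlim_i\mathrm{Aut}(G/P_i)$: a coherent system of automorphisms of the finite quotients is precisely an automorphism of the inverse limit, and injectivity comes from $\bigcap_iP_i=1$.

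The key map is $\pi\colon\mathrm{Aut}(G)\to\mathrm{Aut}\bigl(G/\Phi(G)\bigr)\cong GL_d(\mathbb F_p)$, where $d$ is the minimal number of topological generators of $G$, so that $G/\Phi(G)\cong\mathbb F_p^{\,d}$; this is well defined because $\Phi(G)$ is characteristic, and its image is finite. I would then set $A=\ker\pi$, the group of automorphisms inducing the identity on $G/\Phi(G)$. Being closed of finite index (dividing $|GL_d(\mathbb F_p)|$), the subgroup $A$ is open in $\mathrm{Aut}(G)$.

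It remains to prove that $A$ is pro-$p$, and this is where the one genuinely nontrivial input enters: the classical theorem that a $p'$-group of automorphisms of a finite $p$-group which centralizes the Frattini quotient is trivial, equivalently that the stability group of the Frattini quotient is a $p$-group (see \cite[Theorem 5.1.4]{go}). For each $i\geq2$ one has $\Phi(G/P_i)=\Phi(G)/P_i$, so the Frattini quotient of $G/P_i$ is again $G/\Phi(G)$; hence the image of $A$ in $\mathrm{Aut}(G/P_i)$ is exactly this stability group, a finite $p$-group. Consequently $A$ embeds into $\varprojlim_iA_i$, an inverse limit of finite $p$-groups, and being a closed subgroup of a pro-$p$ group it is itself pro-$p$. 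As $A$ is open, this shows $\mathrm{Aut}(G)$ is virtually pro-$p$.

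The main point requiring care is the passage to the inverse limit, namely that $\mathrm{Aut}(G)$ is genuinely profinite and realized as $\varprojlim_i\mathrm{Aut}(G/P_i)$, together with the correct identification of the Frattini stability group in each finite layer; choosing the lower $p$-central series is what makes the bonding maps legitimate. I would remark that the \emph{virtually pro-$p$} conclusion as argued here needs only topological finite generation, not finite rank; the role of the finite-rank hypothesis is to deliver, through \cite[Theorem 5.7]{dlms}, the stronger assertion that $\mathrm{Aut}(G)$ is itself $p$-adic analytic of finite rank, from which virtual pro-$p$-ness also follows since every $p$-adic analytic group contains an open uniform pro-$p$ subgroup.
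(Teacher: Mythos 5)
Your argument is correct, but it is genuinely different from what the paper does: the paper offers no proof at all, simply declaring the statement immediate from \cite[Theorem 5.7]{dlms}, which delivers the much stronger conclusion that $\mathrm{Aut}(G)$ is itself a profinite group of finite rank (hence virtually a uniform pro-$p$ group). You instead give a self-contained elementary proof: realize $\mathrm{Aut}(G)=\varprojlim_i\mathrm{Aut}(G/P_i)$ along the lower $p$-central series, and show the (open) kernel of the action on $G/\Phi(G)$ is pro-$p$ because at each finite layer it lands in the stability group of the Frattini quotient, which is a $p$-group by Burnside's classical result. All the steps check out — the $P_i$ are characteristic, $\Phi(G/P_i)=\Phi(G)/P_i$, and coherence of the bonding maps holds because $P_i/P_j$ is the $i$th lower $p$-central term of $G/P_j$. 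The only wording I would adjust is ``the image of $A$ in $\mathrm{Aut}(G/P_i)$ is \emph{exactly} this stability group'': containment is what you actually know and is all you need. What your route buys is twofold: it avoids the machinery of $p$-adic analytic groups entirely, and, as you rightly observe, it proves the theorem under the weaker hypothesis that $G$ is merely topologically finitely generated. What the paper's citation buys is brevity and the extra information that $\mathrm{Aut}(G)$ has finite rank, though that extra strength is never used in the application (the proof of Theorem \ref{main2} only needs that $K\cap C_G(P_0)$ is open in $K$, for which virtual pro-$p$-ness of $\mathrm{Aut}(P_0)$ suffices).
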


By the Fitting height of a prosoluble group $G$ we mean the length $h(G)$ of a shortest series of normal subgroups all of whose factors are pronilpotent. Note that the parameter $h(G)$ is finite if, and only if, $G$ is an inverse limit of finite soluble groups of bounded Fitting height. Lemma \ref{proflength} shows that if $G$ is a prosoluble group of finite rank, then $h(G)$ is finite. We write $F(G)$ for the maximal pronilpotent normal subgroup of $G$.

We are now ready to prove Theorem \ref{main2}.

\begin{proof}[Proof of Theorem \ref{main2}] Recall that $G$ is a profinite group in which the centralizer of every nontrivial element has rank at most $r$. We wish to show that either $G$ is virtually a pro-$p$ group for some prime $p$ or $G$ is virtually of rank at most $r+1$. Assume that $G$ is not a pro-$p$ group and so, by Theorem \ref{main}, $G$ has finite rank. By Lemma \ref{virtprosol} $G$ has an open prosoluble normal subgroup with finite Fitting height. Since we wish to prove that $G$ has a certain property virtually, without loss of generality we can assume that $G$ is prosoluble and $h=h(G)$ is finite. If $G$ is pronilpotent, then $G$ can be written as a direct product $G=K\times L$, for some subgroups $K$ and $L$ such that $(|K|,|L|)=1$. Since $K$ and $L$ centralize each other, it follows that both have rank at most $r$ and thus $G$ has rank at most $r$ as well. We therefore assume that $F(G)$ is a proper subgroup of $G$. Choose $q\in\pi(F(G))$ and let $Q_0$ be the Sylow $q$-subgroup of $F(G)$. Let $H$ be a Hall $q'$-subgroup of $G$. If $H$ is finite, then $G$ is virtually pro-$q$ and we are done. Therefore we assume that $H$ is infinite. Lemma \ref{nachalo} (applied with $M=Q_0$ and $A=H$) shows that the rank of $Q_0$ is at most $r$. Let $C=C_H(Q_0)$ In view of Theorem \ref{dlms}, $C$ is open in $H$. 
Therefore $G$ has an open normal subgroup whose intersection with $H$ is contained in $C$. We replace $G$ by that open subgroup and without loss of generality assume that $H=C$. Then for any prime $p\neq q$ the Sylow $p$-subgroups of $G$ have rank at most $r$. Since $G$ is prosoluble with finite Fitting height, $F(G)$ contains its centralizer $C_G(F(G))$. Thus, if $\pi(F(G))=\{q\}$, we obtain a contradiction since $H$ centralizes $Q_0$. Otherwise, pick a prime $p\in\pi(F(G))$ such that $p\neq q$ and let $P$ be the Sylow $p$-subgroup of $F(G)$. Choose a Sylow $q$-subgroup $Q$ of $G$. Now Lemma \ref{nachalo} (applied with $M=P$ and $A=Q$) shows that $Q$ is virtually of rank at most $r$. Let $Q_1$ be an open subgroup of $Q$ of rank $\leq r$. We replace $G$ by an open normal subgroup whose intersection with $Q$ is contained in $Q_1$. Now all Sylow subgroups of $G$ have rank at most $r$. By Theorem \ref{gulu}  $G$ has rank at most $r+1$. The proof is complete.
\end{proof}

\end{document}